\theoremstyle{plain}
\newtheorem{theorem}{Theorem}[section]
\newtheorem{conjecture}[theorem]{Conjecture}
\newtheorem{lemma}[theorem]{Lemma}
\newtheorem*{conjecture*}{Conjecture}
\theoremstyle{definition}
\theoremstyle{remark}
\newtheorem*{remark}{Remark}
\numberwithin{equation}{section}
\newcommand{\R}{\mathbb R}
\newcommand{\N}{\mathbb N}
\newcommand{\Z}{\mathbb Z}
\def\H{\mathbb H}
\newcommand{\Q}{{\mathbb Q}}
\def\vt{\vartheta}
\def\({\left(}
\def\){\right)}
\newcommand{\ol}[1]{\overline{{#1}}}
\newcommand{\re}[1]{\text{Re}\(#1\)}
\newcommand{\im}[1]{\text{Im}\(#1\)}
\newcommand{\abs}[1]{\left|#1\right|}
\newcommand{\wh}[1]{\widehat{#1}}
\newcommand{\wt}[1]{\widetilde{#1}}
\def\SO{\text{SO}}
\def\k2{\frac{k}{2}}
\begin{document}

\title[Partial and Mock Theta Functions]{
A Unified Partial and Mock Theta Function}

\author{Robert C. Rhoades}
\address{Department of Mathematics \\Stanford University \\ Stanford, CA 94305 \\ U.S.A. }
\email{rhoades@math.stanford.edu}

%\subjclass[2000] {11P55, 05A17}

%\thispagestyle{empty} ${\vspace{-2.5in}}$
\begin{abstract} 
The theta functions $$\sum_{n\in \Z} \psi(n) n^\nu e^{2\pi i n^2 z},$$ with $\psi$ a Dirichlet character and $\nu = 0,1$,
have played a significant role in 
the theory of holomorphic modular forms and modular $L$-functions.   A partial theta functions are 
defined a sum over part of the integer lattice, such as $\sum_{n>0} \psi(n) n^\nu e^{2\pi i n^2 z}$. Such 
sums fail to have modular properties.  We give a construction which unifies these partial theta functions 
with the mock theta function introduced by Ramanujan.  The modularity of Ramanujan's mock theta functions 
has only recently been understood by the work of Sander Zwegers.  
\end{abstract}
\maketitle

%\tableofcontents

\section{Introduction and Main Result}\label{sec:Result}
Shimura \cite{shimura} constructed the theta functions 
$$\theta(\psi, \nu;z) = \sum_{n\in \Z} \psi(n) n^\nu e(n^2 z)$$
where $\psi$ is a primitive Dirichlet character of conductor $r$ satisfying 
$\psi(-1) = (-1)^\nu$ and $\nu = 0$ or 1 with $e(z):= e^{2\pi i z}$ and $z$ in the upper half plane.
He proved (Proposition 2.2 of \cite{shimura}) that this theta series is a holomorphic 
modular forms of weight $1/2$ on $\Gamma_0(4r^2)$ with Nebentypus $\psi$ when $\nu =0$ and 
$\psi \cdot \chi_{-4}$ when $\nu = 1$ 
with $\chi_{-4}$ the nontrivial Dirichlet character modulo 4.
These theta series have played a significant role in the development of modular forms and in particular the 
development of half integral weight modular forms and modular $L$-functions.

A partial theta function is a function 
$$\theta^*(\psi, \nu; z) := \sum_{n\ge 0} \psi(n) n^\nu e\(n^2z\)$$ where $\psi$ and $\nu$ are as above,
but  $\psi(-1) = (-1)^{\nu+1}$.  While these functions are not modular, they have played a significant role in the theory
of quantum invariants of $3$-manifolds \cite{lz, hikami} and Vassiliev  knot invariants \cite{zagierStrange, zagierHalf}.

In this note we give a construction that unites these partial theta functions and the mock theta 
functions of Ramanujan.

Following Zagier \cite{zagierBourbaki}, a mock theta function is a $q$-series $H(q) = \sum_{n=0}^\infty a_n q^n$
such that there exists a rational number $\lambda$ and a unary theta function of weight $3/2$, 
$g(z) = \sum_{n>0} b_nq^n$, such that setting $q=e^{2\pi i z}$, then  
$h(z) = q^{\lambda} H(q) + g^*(z)$ non-holomorphic modular form of weight $1/2$, where 
$$g^*(z) = \int_{-\ol{z}}^{i\infty} \frac{ \ol{g(-\ol{\tau})} d\tau}{\sqrt{\tau+z}}.$$
The theta function $g$ is called the \emph{shadow} of the mock theta function $H$.

The mock theta functions  introduced by Ramanujan are given by $q$-hypergeometric 
series.  
For instance, 
the third order mock theta function $f$ may be defined for $\abs{q}<1$ by  
\begin{equation}
f(q) := \sum_{n=0}^\infty \frac{q^{n^2}}{(-q;q)_n^2} 
\end{equation}
It has gain much attention in 
recent years because of its connection to the rank statistic for partitions \cite{bo1, bo}.  It is also intimately related
to the generating function for the number of strictly unimodal sequences \cite{rhoades}. 
The shadow of the mock theta function $f(q)$ is proportional to 
$g_f(z) := \sum_{n\in \Z} \(\frac{-12}{n}\) n q^{\frac{n^2}{24}}$. 
Thus its pseudomodularity is related to the pseudomodularlity
properties of the Eichler integral 
$\int_{-\ol{z}}^{i\infty} \frac{ g_f(\tau) d\tau}{\sqrt{\tau +z}}$

Ramanujan's third order mock theta function may also be defined  for $\abs{q}<1$
by the $q$-hypergeometric series
$$f_2(q) := 1+ \sum_{n\ge 1} \frac{(-1)^n q^n}{(1+q)(1+q^2)\cdots(1+q^n)}.$$
That is for $\abs{q}<1$
$$f_2(q) = f(q)$$
see, for instance, \cite{hikami} (4.20) or the introduction of \cite{BFR}. 

%Ramanujan presented his mock theta functions as examples of functions which were not 
%modular forms, but shared many striking properties with modular forms. Zwegers \cite{zwegers}
%showed that the psuedomodularity properties of Ramanujan's mock theta functions can 
%be explained by completing the function $f(q)$ by a certain non-holomorphic function of $z\in \H$ and setting 
%$q := e^{2\pi i z}$. The necessary non-holomorphic functions are closely related with weight $3/2$ unary theta  
%functions. 

A striking property of the $q$-hypergeometric series defining $f(q)$ and $f_2(q)$ is that they are equal for 
$\abs{q}<1$, 
but are not equal for $\abs{q}>1$. 
Namely, 
for $\abs{q}>1$ we have (see \cite{hikami})
$$f_2 \(q\):= 1+ \sum_{n\ge1} \frac{(-1)^n q^{-\frac{n(n+1)}{2}}}{(1+q^{-1})\cdots (1+q^{-n})} = 2\sum_{n\ge 1} 
\(\frac{-12}{n}\) q^{-\frac{n^2-1}{24}} =:2\psi(q^{-1}).$$
%As in the case of the function $\Phi^*(q)$
%looks very similar to the shadow of $f(q)$.   
%\begin{remark} For $\abs{q}>1$ by (4.16) and (4.20) of \cite{hikami} we have 
%$f(q) = 2\sum_{n\ge0} q^{-\frac{n(3n+1)}{2}}(1-q^{-2n-1})$
%This is one of Rogers's false theta functions as it is very closely related to the theta function 
%$2\sum_{n\ge0} q^{-\frac{n(3n+1)}{2}}(1+q^{-2n-1})$ except half of the signs are wrong. 
%\end{remark}
On the other hand, for $\abs{q}>1$ (see \cite{andrewsLost})  
\begin{align*}
f(q) =& 1+\sum_{n\ge1} \frac{q^{-n}}{(1+q^{-1})^2\cdots(1+q^{-n})^2} 
%= 1+q^{-1}-q^{-2}+2q^{-3}+\cdots \ne 2\psi\(q^{-1}\). \\
= 2\psi(q^{-1}) - \frac{1}{(-q^{-1};q^{-1})_\infty^2}  \sum_{n\ge 0} (-1)^n q^{-\frac{n(n+1)}{2}}
\end{align*}
%Note that the sporadic term has exponential singularities at the same roots of unity as the third order mock theta 
%function $f$. 
\begin{remark}
See \cite{BFR} for details and many more examples of 
$q$-hypergeometric series which are equal to mock theta functions
in the domain $\abs{q}<1$ and not equal but related to partial theta functions in the domain $\abs{q}>1$. . 
\end{remark}

The non-uniqueness of $q$-hyperegeometric series representations for mock theta functions 
is problematic in continuing the function from the inside of the disc to the 
outside of the disc.   Moreover, there may not be a $q$-hypergeometric representation for every mock 
theta function.  

In this paper we give an analytic construction of a single function that equals the mock theta function $f(q)$ in the 
upper half plane and the partial theta function $2\psi(q)$ in the lower half plane.  
\begin{remark} 
According to our definition, $q \psi(q^{24})$ is a partial
theta function. 
\end{remark}

Define $$a_k(s) := \sum_{m\ge 0} \(\frac{\pi}{12 k}\)^{2m+\frac{1}{2}} \frac{1}{\Gamma\(m+\frac{3}{2}\)}  
\frac{1}{s^{m+1}}$$
and 
$$\Phi_{d,k}(z):= \frac{1}{2\pi i}  \int_{\abs{s} = r} \frac{a_k(s) e^{23s}}{1- \zeta_{2k}^d q e^{24s}} ds$$
where $r$ is taken sufficiently small so that $\abs{ \log \( \zeta_{2k}^dq \)} \gg r$ and the integral converges.  
Furthermore, define 
\begin{align}
\omega_{h,k} := \exp\(\pi i s(h,k)\). \end{align}
Here we follow the standard notation for Dedekind sums, namely
$$
s(h,k) := \sum_{\mu \pmod{k}} \(\(\frac{\mu}{k}\)\)\(\(\frac{h\mu}{k}\)\),$$
with the sawtooth function defined as
$$\(\(x\)\) := \begin{cases} x - \lfloor x \rfloor -\frac{1}{2} &
\text{ if } x\in \R \setminus \Z,\\
0 & \text{ if } x\in \Z.\end{cases}
$$

\begin{theorem}\label{thm:main} Let $q = e^{2\pi i z}$. 
The function
$$F(z):= 1+ \pi \sum_{k=1}^\infty \frac{(-1)^{\lfloor \frac{k+1}{2}\rfloor}}{k} \sum_{d\pmod{2k}} \omega_{-d, 2k}
e\(-\frac{d}{8}(1+(-1)^k) + \frac{d}{2k} + z\)  \Phi_{d,k}(z)$$
converges for $z\in \H$ and $z\in \H^{-} = \{z: \im{z} < 0\}$.  Moreover, 
when $z\in \H$ we have $F(z) = f(q)$ and when $z\in \H^{-}$ we have 
$F(z) = 2\psi\(q^{-1}\)$.
\end{theorem}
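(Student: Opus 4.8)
The plan is to establish the two identities by a single mechanism: inside $\Phi_{d,k}$ one expands the geometric series $(1-\zeta_{2k}^dqe^{24s})^{-1}$ and evaluates the contour integrals over $|s|=r$ by residues at $s=0$. Since $a_k(s)=\sum_{m\ge0}\left(\tfrac{\pi}{12k}\right)^{2m+1/2}\Gamma\!\left(m+\tfrac32\right)^{-1}s^{-m-1}$ is a pure Laurent tail, the residue of $a_k(s)e^{Ns}$ at $s=0$ is the partial sum $\sum_m\left(\tfrac{\pi}{12k}\right)^{2m+1/2}N^m/\bigl(m!\,\Gamma(m+\tfrac32)\bigr)$, which equals $N^{-1/4}I_{1/2}\!\left(\tfrac{\pi\sqrt N}{6k}\right)$ when $N>0$ and the oscillatory analogue $|N|^{-1/4}J_{1/2}\!\left(\tfrac{\pi\sqrt{|N|}}{6k}\right)$ when $N<0$ (here $I_{1/2}(y)=\sqrt{2/(\pi y)}\sinh y$ and $J_{1/2}(y)=\sqrt{2/(\pi y)}\sin y$). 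Thus $F$ is engineered to be a Rademacher-type sum, and the two halves of the theorem are the two ways of expanding the geometric series.

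For $z\in\H$ one has $|q|<1$, so on a small enough circle $|\zeta_{2k}^dqe^{24s}|<1$ and $(1-\zeta_{2k}^dqe^{24s})^{-1}=\sum_{n\ge0}\zeta_{2k}^{dn}q^ne^{24ns}$; term-by-term integration turns $\Phi_{d,k}(z)$ into $\sum_{n\ge0}\zeta_{2k}^{dn}q^n(24n+23)^{-1/4}I_{1/2}\!\left(\tfrac{\pi\sqrt{24n+23}}{6k}\right)$. I would then substitute into the definition of $F$, interchange the (absolutely convergent, since $|q|<1$) sums, and read off the coefficient of $q^m$ for $m\ge1$: the inner sum $\sum_{d\pmod{2k}}\omega_{-d,2k}\,e\!\left(-\tfrac d8(1+(-1)^k)+\tfrac{dm}{2k}\right)$ is precisely the Kloosterman-type sum attached to the Andrews–Dragonette exact formula, and together with the Bessel factor and the weights $(-1)^{\lfloor(k+1)/2\rfloor}/k$ one recovers, up to the standard normalisation, the exact formula for the Fourier coefficients of $f(q)$ proved by Bringmann–Ono \cite{bo,bo1}. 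Since the constant terms agree ($f(q)=1+\cdots$), this gives $F(z)=f(q)$ on $\H$, and the convergence on $\H$ is the convergence of that exact formula.

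For $z\in\H^-$ the situation is reversed: $|q|>1$, so $|\zeta_{2k}^dqe^{24s}|>1$ and the correct expansion is $(1-\zeta_{2k}^dqe^{24s})^{-1}=-\sum_{n\ge1}\zeta_{2k}^{-dn}q^{-n}e^{-24ns}$, giving $\Phi_{d,k}(z)=-\sum_{n\ge1}\zeta_{2k}^{-dn}q^{-n}(24n-23)^{-1/4}J_{1/2}\!\left(\tfrac{\pi\sqrt{24n-23}}{6k}\right)$ — the same shape but with the oscillatory Bessel function. Feeding this into $F$ makes it a power series in $q^{-1}$ whose coefficient of $q^{-N}$ ($N\ge0$) is $\pi(24N+1)^{-1/4}$ times a Rademacher-type sum $\sum_{k\ge1}\frac{(-1)^{\lfloor(k+1)/2\rfloor}}{k}J_{1/2}\!\left(\tfrac{\pi\sqrt{24N+1}}{6k}\right)A_{2k}(N)$ against the \emph{same} family of Kloosterman sums $A_{2k}$. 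So the claim $F(z)=2\psi(q^{-1})$ reduces to showing this sum equals $2\left(\tfrac{-12}{m}\right)$ when $24N+1=m^2$ and vanishes for all other $N$ — a Rademacher-type formula for the partial theta function $\psi$. Here I would use that $q^{1/24}\psi(q)=\sum_{n\ge1}\left(\tfrac{-12}{n}\right)q^{n^2/24}$ is the holomorphic Eichler integral of the shadow $g_f(z)=\sum_{n\in\Z}\left(\tfrac{-12}{n}\right)n\,q^{n^2/24}$, which is a genuine modular form of weight $3/2$ (Shimura \cite{shimura}): applying the circle method to this Eichler integral — Poisson summation on the Ford circles, using the modular transformation of $g_f$ to evaluate the integral near each root of unity $\zeta_{2k}^{-d}$ — produces exactly the sum above, with the lacunarity of $\psi$ showing up as the vanishing of every off-diagonal coefficient. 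Such expansions for Eichler integrals of unary theta functions are also available in the literature on quantum modular forms, cf.\ \cite{lz,hikami,BFR}.

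I expect the lower half plane to be the real obstacle, on two counts. Convergence there is not automatic: the $n$-sum is harmless ($|q^{-n}|\to0$ and $J_{1/2}$ is bounded), but the $k$-sum has the classical Rademacher subtlety and will need Salié's evaluation of the half-integral-weight Kloosterman sums $A_{2k}$ rather than mere cancellation from Bessel oscillation; the analogous point on $\H$ is governed by $I_{1/2}(y)\ll e^y/\sqrt y$ being dominated by $|q^n|<1$, and is again the convergence of the Bringmann–Ono formula. More fundamentally, one must prove that a Rademacher-type series attached to a \emph{partial} theta function — for which, unlike the modular case, the Ford-circle contributions do \emph{not} reassemble into a single modular transformation — nonetheless collapses onto the sparse exponent set $\{N:24N+1\in\mathbb Z^2\}$; it is exactly here that the explicit Dedekind sums $\omega_{-d,2k}$ and the signs $(-1)^{\lfloor(k+1)/2\rfloor}$ must do their work, and this is the crux of why one function $F$ can be a mock theta function on one side of $\R$ and a partial theta function on the other.
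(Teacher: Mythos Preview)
Your upper-half-plane argument is the paper's, run in the opposite direction: the paper starts from the Bringmann--Ono exact formula (Theorem~2.1) and repackages the inner Bessel sum as the contour integral $\Phi_{d,k}$, while you start from $\Phi_{d,k}$ and unpack it; the content is identical. Your lower-half-plane reduction is likewise the same up to the point where everything hinges on evaluating
\[
\widetilde\alpha(N)\;=\;\frac{\pi}{(24N+1)^{1/4}}\sum_{k\ge1}\frac{(-1)^{\lfloor(k+1)/2\rfloor}}{k}\,A_{2k}\!\Bigl(-N-\tfrac{k(1+(-1)^k)}{4}\Bigr)\,J_{1/2}\!\Bigl(\tfrac{\pi\sqrt{24N+1}}{12k}\Bigr).
\]

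Where you diverge is in how to compute $\widetilde\alpha(N)$. You propose a direct circle-method/Poisson attack on the Eichler integral of the shadow and correctly flag this as the hard part, worrying about Sali\'e-type estimates and the collapse onto the square support. The paper avoids all of this with a one-line observation (Lemma~2.2): the \emph{same} Bringmann--Ono Poincar\'e series whose holomorphic part $P_h$ gives $q^{-1/24}f(q)$ also has a non-holomorphic part $P_{nh}$, and since $P_h+P_{nh}=\widehat{h_3}=h_3+R_3$ with $P_h=h_3$, one has $P_{nh}=R_3$. But the $q^{-N}$-coefficient of $P_{nh}$ is exactly $\pi^{-1/2}\Gamma\bigl(\tfrac12,\tfrac{\pi(24N+1)y}{6}\bigr)\widetilde\alpha(N)$, while $R_3$ has Fourier coefficients $-2\bigl(\tfrac{-12}{n}\bigr)\pi^{-1/2}\Gamma\bigl(\tfrac12,\tfrac{\pi n^2y}{6}\bigr)$ supported on $24N+1=n^2$. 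Matching the two gives $\widetilde\alpha(N)$ instantly --- the lacunarity and the values fall out together, and convergence is inherited from \cite{bo1}. So the Rademacher-type identity you want to prove from scratch is already contained in the non-holomorphic half of Theorem~2.1; your proposed route would amount to re-deriving that half by hand.
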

\begin{remark}
Similar theorems exist for any partial theta function for a nontrivial character $\psi$.  
This theorem explains that partial theta functions may be constructed as lower half plane analogous of the 
mock theta functions.  The mock theta functions are completed to 
a non-holomorphic modular form by the addition of an Eichler integral. 
It would be interesting to find a similar completion for the partial theta functions.
\end{remark}

Our theorem relies on the construction of a Maass-Poincar\'e series for the mock theta function and 
the ``expansion of zero'' principle of Rademacher \cite{rademacher1} 
(see, for instance, Chapter IX of Lehner's book on Discontinuous Groups \cite{lehnerBook}).  
Rademacher proved an exact formula for $p(n)$, the number of partitions of $n$.  Using his 
formula he found an extension of the generating function to the lower half plane.  
Rademacher conjectured and later proved \cite{rademacherBook} that each of the Fourier coefficients 
of the function in the lower half plane is zero.  Rademacher's conjecture was proved independently 
by Petersson \cite{petersson}. 
Such expansions were noticed earlier by Poincar\'e.   
See his memoir on Fuchsian groups \cite{poincare} or 
the english Translation of Poincar\'e's paper by Stillwell \cite{poincareBook} 
(p. 204).   Extensions of the ``expansion of zero'' principle was 
written about by Lehner \cite{lehner}. Additionally, 
Knopp \cite{knopp} wrote about this principle in connection with Eichler cohomology.  
The perspective of Knopp's work is relevant here when one makes the connection between 
mock theta functions  $H(q)$
and their completions $h(q) = q^{\lambda} H(q) + g^*(z)$.   We do not address this connection here,
 but we hope to take it up in future work.

\section{Preliminaries}\label{sec:Background}

In Zwegers notation \cite{zwegers1, zagierBourbaki} let
$$h_3(z) = q^{-\frac{1}{24}} f(q)$$
and set $$R_3(z) := 
\frac{i}{\sqrt{3}} \int_{-\ol{z}}^\infty \frac{g_3(\tau)}{\sqrt{(\tau+z)/i}} d\tau$$
where
$$g_3(z) = \sum_{n\equiv 1\pmod{6}} nq^{\frac{n^2}{24}} = \sum_{n=1}^\infty \(\frac{-12}{n}\) n q^{\frac{n^2}{24}}.$$

Applying the straightforward calculation 
$$\int_{-\ol{z}}^{i\infty} \frac{e^{2\pi i \tau \frac{n^2}{24}}}{\sqrt{-i(\tau +z) } } d\tau  = i \(\frac{12}{\pi }\)^{\frac{1}{2}} n^{-1} 
\Gamma\(\frac{1}{2}, \frac{\pi n^2 y}{6}\) q^{-\frac{n^2}{24}} $$ we may rewrite $R_3$ as  
\begin{equation}\label{eqn:R3}
R_3(z) =  - 2\sum_{n=1}^\infty \(\frac{-12}{n}\)\pi^{-\frac{1}{2}} \Gamma\(\frac{1}{2}, \frac{\pi n^2 y}{6}\)
q^{\frac{-n^2}{24}}.
\end{equation}
Then the corrected function 
$$\wh{h_3}(z) = h_3(z) + R_3(z)$$ is a weight $\frac{1}{2}$ harmonic Maass form with respect to $\Gamma(2)$
(see  \cite{zagierBourbaki} page 07).

By work of Bringmann and Ono \cite{bo1} we may write $f$ as a Poincare series. 
Define the Kloosterman-like sum by  
$$A_k(n) = \sum_{x\pmod{k}} \omega_{-x,k}\cdot e\(\frac{nx}{k}\)$$
where the sum is over those $x$ relatively prime to $k$.
\begin{theorem}[Theorem 3.2 and Section 5 of \cite{bo1}]\label{thm:poincare} 
In the notation above we have $\wh{h_3}(z) = P_h(z) + P_{nh}(z)$ where 
\begin{align*}
P_h(z) := \  q^{-\frac{1}{24}} + \sum_{n=1}^\infty \alpha(n) q^{n-\frac{1}{24}}  \hspace{.1in}  \text{ and } \hspace{.1in} 
P_{nh}(z):=  - \pi^{-\frac{1}{2}}\Gamma\(\frac{1}{2}, \frac{\pi y}{6}\) q^{-\frac{1}{24}} + \sum_{n=-\infty}^0 \gamma_y(n) q^{n-\frac{1}{24}} 
 \end{align*}
 where 
 $$\alpha(n) = \frac{\pi}{ (24n-1)^{\frac{1}{4}}} \sum_{k=1}^\infty \frac{(-1)^{\lfloor \frac{k+1}{2}\rfloor} 
A_{2k}\(n-\frac{k(1+(-1)^{k})}{4}\)}{k} ) I_\frac{1}{2}\(\frac{\pi \sqrt{24n-1}}{12 k}\)$$
and
 \begin{align*}
 \gamma_y(-n) =& \pi^{-\frac{1}{2}} \Gamma\(\frac{1}{2}, \frac{\pi \abs{24n+1} y}{6}\) \\ & \times  
\frac{\pi}{(24n+1)^{\frac{1}{4}}} \sum_{k=1}^\infty \frac{(-1)^{\lfloor \frac{k+1}{2}\rfloor}}{k} 
A_{2k}\(-n-\frac{k(1+(-1)^k)}{4}\) J_\frac{1}{2}\(\frac{\pi \sqrt{24n+1}}{12k}\).
\end{align*}
\end{theorem}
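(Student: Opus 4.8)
The plan is to construct $\wh{h_3}$ as a weight-$\tfrac12$ Maass--Poincar\'e series attached to the cusp $i\infty$ and then read off its Fourier expansion by unfolding. The input is the fact recorded above that $\wh{h_3}=h_3+R_3$ is a harmonic Maass form of weight $\tfrac12$ on $\Gamma(2)$ whose only singularity is the principal part $q^{-1/24}$ at $i\infty$, with automorphy governed by the Dedekind-sum multiplier encoded in the factors $\omega_{h,k}$. Since a harmonic Maass form of this weight is determined by its principal parts up to a cusp form, it suffices to build a Poincar\'e series with the same principal part and then show the two forms coincide.

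First I would fix the seed. Let $\Delta_{1/2}$ be the weight-$\tfrac12$ hyperbolic Laplacian and let $\calM$ denote the weight-$\tfrac12$ $M$-Whittaker function, normalized so that $\varphi(z):=\calM\(\tfrac{\pi y}{6}\)\,e\(-\tfrac{z}{24}\)$ satisfies $\Delta_{1/2}\varphi=0$ and contributes the principal part $q^{-1/24}$. I then form
$$\mathbb P(z):=\sum_{\gamma\in\Gamma_\infty\backslash\Gamma(2)}(\varphi|_{1/2}\gamma)(z),$$
where the slash uses the same multiplier as $\wh{h_3}$. Because the weight lies below $2$ this sum is not absolutely convergent, so I would define it through Hecke's analytic-continuation-in-$s$ device: insert a spectral parameter, establish convergence for $\re{s}$ large, and continue to the harmonic point, following the Niebur/Hejhal treatment of low-weight Poincar\'e series and its harmonic-Maass adaptation.

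Next I would unfold. Parametrizing the coset representatives by matrices with lower-left entry $c=2k$ and summing the multiplier together with the additive characters over $a\bmod 2k$ produces exactly the twisted Kloosterman sums $A_{2k}(\cdot)$ defined above; evaluating the multiplier on these matrices is precisely where the sign $(-1)^{\lfloor(k+1)/2\rfloor}$ and the index shift $-\tfrac{k(1+(-1)^k)}{4}$ arise, from reducing the Dedekind-sum multiplier separately on the two families of cusps of $\Gamma(2)$ according to the parity of $k$. The remaining transverse integral is a standard Whittaker--Bessel integral: for the positive frequencies $n\ge1$ it evaluates to $I_{1/2}\(\tfrac{\pi\sqrt{24n-1}}{12k}\)$ times the elementary normalization $\tfrac{\pi}{(24n-1)^{1/4}}$, yielding $\alpha(n)$ and the holomorphic piece $P_h$; for the non-positive frequencies $n\le0$ the same integral produces $J_{1/2}\(\tfrac{\pi\sqrt{24n+1}}{12k}\)$ against the incomplete-Gamma factor $\pi^{-1/2}\Gamma\(\tfrac12,\tfrac{\pi\abs{24n+1}y}{6}\)$, giving $\gamma_y(n)$ and the non-holomorphic piece $P_{nh}$, including the leading term $-\pi^{-1/2}\Gamma\(\tfrac12,\tfrac{\pi y}{6}\)q^{-1/24}$ coming from the frequency $n=0$.

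Finally I would identify $\mathbb P$ with $\wh{h_3}$. By construction their principal parts at $i\infty$ agree, and checking the principal parts at the remaining cusps of $\Gamma(2)$ shows that $\wh{h_3}-\mathbb P$ is a harmonic Maass form of weight $\tfrac12$ with no principal part anywhere; since its non-holomorphic parts also cancel, it is in fact a holomorphic modular form of weight $\tfrac12$. Because every exponent here is shifted by $-\tfrac1{24}$, the absence of a principal part forces this form to vanish at every cusp, i.e.\ to be a weight-$\tfrac12$ cusp form, and the space of such forms is trivial by Serre--Stark; hence the difference is $0$ and $\wh{h_3}=\mathbb P=P_h+P_{nh}$. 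I expect the principal obstacles to be the convergence and analytic continuation of the sub-critical-weight Poincar\'e series, together with the careful bookkeeping of the multiplier system needed to obtain the Kloosterman sums $A_{2k}$ with exactly the stated signs and argument shifts; the Bessel evaluations and the concluding vanishing step should be comparatively routine.
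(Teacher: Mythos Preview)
The paper does not prove this theorem at all: it is quoted verbatim as ``Theorem~3.2 and Section~5 of \cite{bo1}'' and used as a black box, so there is no in-paper argument to compare against. Your outline is precisely the Bringmann--Ono strategy from \cite{bo1}: build a weight-$\tfrac12$ Maass--Poincar\'e series with principal part $q^{-1/24}$ via analytic continuation in a spectral parameter, unfold to obtain the Fourier expansion with the Kloosterman-type sums $A_{2k}$ and the $I_{1/2}$/$J_{1/2}$ Bessel integrals, and then identify the result with $\wh{h_3}$ by killing the difference in a space of weight-$\tfrac12$ forms.

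Two points to tighten if you flesh this out. First, the claim ``its non-holomorphic parts also cancel'' is not automatic from matching principal parts; you need to observe that both $\mathbb P$ and $\wh{h_3}$ have the \emph{same shadow} (proportional to $g_3$), which is what forces the non-holomorphic pieces to agree. Second, the final vanishing step is not literally Serre--Stark (which classifies all weight-$\tfrac12$ forms as theta series) but rather the consequence that, at the relevant level and character, none of those theta series are cusp forms; in \cite{bo1} this is handled by a direct dimension check. Neither of these is a genuine gap in your plan, just places where the one-line justification hides a verification.
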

\begin{remark}
The function $P_h$ is a holomorphic function while the function $P_{nh}$ is a non-holomorphic function.  This 
explains the subscripts. 
\end{remark}

From this we may deduce the following lemma. 
\begin{lemma}\label{lem:gammay}
Define 
$$\wt{\alpha}(n):= \frac{\pi}{(24n+1)^{\frac{1}{4}}} 
\sum_{k\ge 1}\frac{(-1)^{\lfloor \frac{k+1}{2}\rfloor}}{k} A_{2k}\(-n-\frac{k(1+(-1)^k)}{4}\) 
J_\frac{1}{2}\(\frac{\pi \sqrt{24n+1}}{12 k}\).$$
For $\wt{\alpha}(0) = 1$ and for $\ell = \frac{n^2-1}{24}> 0$  
we have 
$$-2\(\frac{-12}{n}\) =\wt{\alpha}(\ell).$$
Furthermore, $\wt{\alpha}(\ell) = 0$ in all remaining cases. 
\end{lemma}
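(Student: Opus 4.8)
The plan is to identify the non-holomorphic series $P_{nh}$ of Theorem~\ref{thm:poincare} with the non-holomorphic Eichler integral $R_3$, and then to read the values $\wt{\alpha}(\ell)$ directly off \eqref{eqn:R3}. By Section~\ref{sec:Background} we have two expansions of the same harmonic Maass form, $\wh{h_3} = h_3 + R_3$ and $\wh{h_3} = P_h + P_{nh}$, where $h_3 = q^{-1/24} f(q)$ and $P_h$ are holomorphic on $\H$ while, term by term, $R_3$ and $P_{nh}$ are built from non-holomorphic pieces $\Gamma(\tfrac12, c y)\, q^m$ with $c > 0$. Subtracting, $P_h - h_3 = R_3 - P_{nh}$ is simultaneously holomorphic and a convergent combination of such pieces; since distinct pieces $\Gamma(\tfrac12, c y) q^m$ are linearly independent and none is holomorphic (equivalently, by uniqueness of the holomorphic/non-holomorphic splitting of a harmonic Maass form), both sides must vanish. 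Hence $h_3 = P_h$ and, crucially, $P_{nh} = R_3$.

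Granting $P_{nh} = R_3$, the lemma is a matching of Fourier coefficients. Grouping \eqref{eqn:R3} by powers of $q$, the only non-zero terms occur at $q^{-N^2/24}$, $N \ge 1$; since $\(\frac{-12}{N}\)$ vanishes unless $\gcd(N,12) = 1$, and for such $N$ one has $N^2 \equiv 1 \pmod{24}$, these powers are exactly $q^{-\ell - 1/24}$ with $\ell = \frac{N^2-1}{24} \ge 0$, and conversely every perfect square $24\ell + 1 = N^2$ arises this way with $\gcd(N,12)=1$ (so that $\(\frac{-12}{N}\) \ne 0$). On the other side, for $\ell \ge 1$ the coefficient of $q^{-\ell - 1/24}$ in $P_{nh}$ is $\gamma_y(-\ell) = \pi^{-1/2}\Gamma\(\tfrac12, \tfrac{\pi|24\ell+1|y}{6}\)\,\wt{\alpha}(\ell)$ by the definition of $\wt{\alpha}$. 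Comparing the two: if $24\ell+1 = N^2$, then cancelling the common factor $\pi^{-1/2}\Gamma(\tfrac12, \tfrac{\pi N^2 y}{6})$ yields $\wt{\alpha}(\ell) = -2\(\frac{-12}{N}\)$, which is the claimed value since $\ell = \frac{N^2-1}{24}$; and if $24\ell+1$ is not a perfect square there is no matching term in $R_3$, so $\gamma_y(-\ell) = 0$ and hence $\wt{\alpha}(\ell) = 0$ --- this is the ``expansion of zero'' phenomenon.

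The case $\ell = 0$ (i.e.\ $N = 1$) needs the full $q^{-1/24}$-coefficient of $P_{nh}$ as exhibited in Theorem~\ref{thm:poincare}: it is $-\pi^{-1/2}\Gamma(\tfrac12, \tfrac{\pi y}{6}) + \gamma_y(0)$, the first summand being the explicit leading non-holomorphic term and $\gamma_y(0) = \pi^{-1/2}\Gamma(\tfrac12, \tfrac{\pi y}{6})\,\wt{\alpha}(0)$. Equating this with the $N = 1$ term $-2\(\frac{-12}{1}\)\pi^{-1/2}\Gamma(\tfrac12, \tfrac{\pi y}{6})$ of $R_3$ and cancelling $\pi^{-1/2}\Gamma(\tfrac12, \tfrac{\pi y}{6})$ determines the value of $\wt{\alpha}(0)$ recorded in the statement.

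I expect the only real work to lie in the first paragraph: justifying cleanly that $R_3$ is the \emph{entire} non-holomorphic part of $\wh{h_3}$ and $P_{nh}$ purely non-holomorphic, so that $P_{nh} = R_3$ as functions --- this rests on the linear independence of the non-holomorphic modes (equivalently, the uniqueness of the decomposition of a harmonic Maass form into holomorphic and non-holomorphic parts) and on enough uniformity to rearrange the two infinite expansions and compare them power of $q$ by power of $q$. Once $P_{nh} = R_3$ is in hand the rest is bookkeeping, the only arithmetic input being the elementary fact that $\(\frac{-12}{N}\) \ne 0$ precisely when $\gcd(N,12) = 1$, in which case $N^2 \equiv 1 \pmod{24}$, which makes the supports of the two expansions coincide and accounts for every square $24\ell + 1 = N^2$.
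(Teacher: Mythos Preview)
Your approach is essentially the same as the paper's: identify $P_{nh}=R_3$ by matching the two decompositions $\wh{h_3}=h_3+R_3=P_h+P_{nh}$, and then read off the $\wt{\alpha}(\ell)$ from \eqref{eqn:R3} coefficient by coefficient. The paper compresses your first paragraph into one line (simply asserting $q^{1/24}R_3(z)=-\pi^{-1/2}\Gamma(\tfrac12,\tfrac{\pi y}{6})+\sum_{n\ge 0}\gamma_y(-n)q^{-n}$), so your explicit appeal to the uniqueness of the holomorphic/non-holomorphic splitting is a welcome clarification rather than a different route.
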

\begin{proof}
Comparing from the definition of $\wh{h_3}$ and the series
for the coefficients $\gamma_y(n)$ in Theorem \ref{thm:poincare} we have
$$q^{\frac{1}{24}} R_3(z) = -\pi^{-\frac{1}{2}}\Gamma(\frac{1}{2}, \frac{\pi y}{6}) + \sum_{n =0}^\infty 
\gamma_y(-n) q^{-n}.$$
Using \eqref{eqn:R3} and the series expansion of $\gamma_y(n)$ 
we see that  $\wt{\alpha}(0) = 1$ and for $\ell >0$ we have  
$\gamma_y(-\ell) =0$ unless $\ell = \frac{n^2-1}{24}$ for some $n\equiv 1, 5 \pmod{6}$, 
that is $24 \ell+1 = n^2$ for some $n\ge 1$. For such $\ell$ 
we have 
$-2\(\frac{-12}{n}\) =\wt{\alpha}(\ell).$
\end{proof}

\section{Proof of Theorem \ref{thm:main}}\label{sec:Proof}

Throughout this section, for $c \in \N$ let $\zeta_c := e^{2\pi i \frac{1}{c}}$ be a root of unity. 
We begin by showing that $f(q)$  equals $F(q)$ for $\abs{q}<1$. 
We apply Theorem \ref{thm:poincare} and switch the order of summation to obtain 
\begin{align*}
f(q) =&   1+ \pi\sum_{n=1}^\infty  (24n-1)^{-\frac{1}{4}} \sum_{k=1}^\infty \frac{(-1)^{\lfloor \frac{k+1}{2}\rfloor} 
A_{2k}(n-\frac{k(1+(-1)^{k})}{4})}{k}) I_\frac{1}{2}\(\frac{\pi \sqrt{24n-1}}{12 k}\) e\(n(x+iy)\) \\
=& 1+ 
\pi \sum_{k=1}^{\infty} \frac{(-1)^{\lfloor \frac{k+1}{2}\rfloor} }{k} 
\sum_{n\ge 1} \frac{A_{2k} \(n-\frac{k(1+(-1)^{k}) }{4} \) } {(24n-1)^{\frac{1}{4}}} I_\frac{1}{2}\(\frac{\pi \sqrt{24n-1}}{12 k}\) 
e\(n (x+iy)\).
\end{align*}
\begin{remark} The order we have switched to is in some sense more natural.  To obtain the Fourier expansion, one 
completes the switch in the other direction. For instance see  the proof of Theorem 3.2 of \cite{bo1}. 
\end{remark}

Next we open the sum in the definition of $A_k$ to obtain 
\begin{align*}
f(q) = 1+\pi \sum_{k=1}^\infty \frac{(-1)^{\lfloor \frac{k+1}{2}\rfloor} }{k}   
\sum_{d\pmod{2k}}&\omega_{-d,2k} e\(-\frac{d (1+(-1)^k)}{8} \)   \\
&\times \sum_{n\ge 1} \frac{e\(n \( \frac{d}{2k} + x+iy\) \) }{ (24n-1)^{\frac{1}{4} }}
I_\frac{1}{2}\(\frac{\pi \sqrt{24n-1}}{12k}\).
\end{align*}

We begin by rewriting the function
$$S(\zeta_{2k}^d q):= \sum_{n\ge 1} \frac{e\(n \( \frac{d}{2k} + x+iy\) \) }{ (24n-1)^{\frac{1}{4} }}
I_\frac{1}{2}\(\frac{\pi \sqrt{24n-1}}{12k}\).$$ Later we will  
continue this function 
for values with $y<0$.
Write  $\tilde{q}:= \zeta_{2k}^d q$
so that 
$$S(\tilde{q}) = \sum_{n\ge 1} \frac{1}{ (24n-1)^{\frac{1}{4} }}
I_\frac{1}{2}\(\frac{\pi \sqrt{24n-1}}{12k}\)\tilde{q}^n.$$
and set
$$B_k(24t-1):=   \frac{1}{ (24t-1)^{\frac{1}{4} }}
I_\frac{1}{2}\(\frac{\pi \sqrt{24t-1}}{12k}\).$$

\begin{lemma} In the notation above, we have 
$$B_k(t) = \sum_{m\ge 0} \frac{b_m(k)}{m!}t^m$$
with $b_m(k) = O\(\frac{C^m}{m!}\)$.  Moreover, 
$b_m(k) = \(\frac{\pi}{12k}\)^{2m+\frac{1}{2}} \frac{1}{\Gamma\(m+\frac{3}{2}\)}.$ 
\end{lemma}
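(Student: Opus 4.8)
The plan is to compute the Taylor expansion of $B_k(t)$ around $t=0$ directly from the power series of the Bessel function $I_{1/2}$. Recall that $I_{1/2}(w)$ has the convergent expansion $I_{1/2}(w) = \sum_{j\ge 0} \frac{(w/2)^{2j+1/2}}{j!\,\Gamma(j+3/2)}$, valid for all $w$. Substituting $w = \frac{\pi\sqrt{24t-1}}{12k}$, so that $w/2 = \frac{\pi\sqrt{24t-1}}{24k}$, we get
\begin{equation*}
B_k(24t-1) = \frac{1}{(24t-1)^{1/4}} \sum_{j\ge 0} \frac{1}{j!\,\Gamma(j+3/2)} \(\frac{\pi\sqrt{24t-1}}{24k}\)^{2j+1/2}.
\end{equation*}
The crucial observation is that the factor $(24t-1)^{1/4}$ in the denominator exactly cancels against the half-integer power in the numerator: $\(\sqrt{24t-1}\)^{2j+1/2} = (24t-1)^{j+1/4}$, so dividing by $(24t-1)^{1/4}$ leaves $(24t-1)^j$, an honest polynomial in $t$ of degree $j$. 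Hence after relabeling by writing $u = 24t-1$ one finds a clean closed form in $u$; but the statement as written is for $B_k$ evaluated at $u = 24t-1$, i.e. $B_k(t) = \frac{1}{t^{1/4}} I_{1/2}\(\frac{\pi\sqrt t}{12k}\)$, and we just showed
\begin{equation*}
B_k(t) = \sum_{j\ge 0} \frac{1}{j!\,\Gamma(j+3/2)} \(\frac{\pi}{24k}\)^{2j+1/2} t^j.
\end{equation*}

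To match this against the desired form $B_k(t) = \sum_{m\ge 0} \frac{b_m(k)}{m!} t^m$, I would read off $b_m(k) = \frac{m!}{m!\,\Gamma(m+3/2)}\(\frac{\pi}{24k}\)^{2m+1/2}$. Here I should check the normalization against the claimed formula $b_m(k) = \(\frac{\pi}{12k}\)^{2m+1/2}\frac{1}{\Gamma(m+3/2)}$: since $\(\frac{\pi}{24k}\)^{2m+1/2} = 2^{-(2m+1/2)}\(\frac{\pi}{12k}\)^{2m+1/2}$, the two differ by a factor $2^{-(2m+1/2)}$, which must be absorbed into the definition of $a_k(s)$ (where the same $\frac{\pi}{12k}$ appears) or tracked to the constant $\pi$ out front; I would reconcile this bookkeeping carefully, as sign and power-of-two conventions in the Bessel expansion are the easiest place to slip. (Likely the intended $B_k$ has a factor of $2^{2m+1/2}$ built in, or the half-integer Bessel function is being written in the $\sinh$-normalization $I_{1/2}(w) = \sqrt{\frac{2}{\pi w}}\sinh w$, whose Taylor coefficients are $\sqrt{\frac{2}{\pi}}\frac{w^{2j+1/2}}{(2j+1)!}$ — I would use whichever normalization makes the stated constant come out, and state it explicitly.)

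Once the closed form for $b_m(k)$ is in hand, the bound $b_m(k) = O(C^m/m!)$ is immediate: $\frac{\pi}{12k} \le \frac{\pi}{12}$ for $k\ge 1$, so $\(\frac{\pi}{12k}\)^{2m+1/2} \le C_0^m$ for a suitable absolute constant $C_0$ (uniform in $k$), while by Stirling $\frac{1}{\Gamma(m+3/2)} = O(C_1^m/m!)$; multiplying gives $b_m(k) = O(C^m/m!)$ with $C = C_0 C_1$. This also retroactively confirms that the series $\sum_m \frac{b_m(k)}{m!}t^m$ has infinite radius of convergence, so all the manipulations rearranging $I_{1/2}$ into a power series in $t$ are legitimate term by term. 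The main obstacle, such as it is, is purely the constant-chasing in the previous paragraph — the mathematical content (cancellation of the fractional power, Stirling bound) is routine, but getting the normalization of the half-integral Bessel function and the powers of $2$ to agree with the paper's conventions for $a_k(s)$ requires care.
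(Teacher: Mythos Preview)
Your approach is essentially the same as the paper's: both expand $I_{1/2}$ as a power series (the paper routes through $J_{1/2}(ix)$, you go directly) and observe that the $t^{1/4}$ in the denominator cancels against the half-integer power, leaving a genuine power series in $t$ whose coefficients are read off. Your worry about the stray factor $2^{-(2m+1/2)}$ is well-founded --- the paper's own computation carries the same slip (it writes $\left(\frac{\pi}{12k}\right)^{2m+1/2}$ where $\left(\frac{\pi}{24k}\right)^{2m+1/2}$ should appear), so your careful bookkeeping here is if anything an improvement; and your explicit Stirling justification of the $O(C^m/m!)$ bound fills in what the paper leaves implicit.
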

\begin{proof}
The proof follows from the fact that $I_\frac{1}{2}(x) = i^{-\frac{1}{2}} J_\frac{1}{2}(ix)$ and 
the Taylor expansion of $J_\frac{1}{2}(ix)$ is given by 
$$\sum_{m=0}^\infty \frac{(-1)^m}{m! \Gamma\(m+\frac{3}{2}\)} \(\frac{i}{2} x\)^{2m+\frac{1}{2}}.$$
Hence 
$$I_\frac{1}{2}\(\frac{\pi}{12k}\sqrt{t}\) = (t)^\frac{1}{4} \sum_{m\ge 0} \(\frac{\pi}{12 k}\)^{2m+\frac{1}{2}} \frac{1}{
m! \Gamma\(m+\frac{3}{2}\)} t^m.$$
\end{proof}

A standard calculation (Bernoulli-Laplace transform?)
gives 
$$B_k(t) = \frac{1}{2\pi i} \int_{\abs{s} = r} e^{st}\sum_{m\ge 0} \frac{b_{m}(k)}{s^{m+1}} ds$$
where $r$ may be taken to sufficiently small.
We have $$a_k(s) = \sum_{m\ge 0} \frac{b_{m}(k)}{s^{m+1}}.$$
By the previous lemma this series is absolutely convergent for all $s$. 
We have
\begin{align}
S(\tilde{q}) = \sum_{n\ge 1} B_{k}(24n-1) (q\zeta_{2k}^d)^n =& \frac{1}{2\pi i} \int_{\abs{s}=r}
 \sum_{n\ge 1} e^{s(24n-1)}  
(q\zeta_{2k}^d )^n  a_k(s) ds  \\
=& \frac{1}{2\pi i} \zeta_{2k}^d q \int_{\abs{s} =r} \frac{a_k(s)e^{23s}  ds}{1-\zeta_{2k}^d q e^{24s}}. \label{eqn:S}
\end{align}
where now we point out that $r$ can be taken to be small enough so that $\abs{e\(x+iy\) e^{24s}} <1$.

Thus we have established the first part of the Theorem, namely
$$f(q) = 1+ \pi \sum_{k=1}^\infty \frac{(-1)^{\lfloor \frac{k+1}{2}\rfloor}}{k} \sum_{d\pmod{2k}} \omega_{-d, 2k}
e\(-\frac{d}{8}(1+(-1)^k) + \frac{d}{2k} + z\)  \Phi_{d,k}(z)$$
where we recall that 
$$\Phi_{d,k}(z) = \frac{1}{2\pi i}  \int_{\abs{s} = r} \frac{a_k(s) e^{23s}}{1- \zeta_{2k}^d q e^{24s}} ds.$$

To prove the second part of the theorem we return to the the integral in \eqref{eqn:S}.
This integral converges so long as $- \frac{1}{24}\log(\zeta_{2k}^d q)$ 
is outside the circle we integrate over, 
that is so long as $\abs{\log(\zeta_{2k}^d q)} > 24r$. When $\zeta_{2k}^d q \ne 1$ 
we can find $r$ small enough so that this 
integral converges.  In particular $\Phi_{d,k}(z)$ is 
regular in the entire complex sphere except at $\zeta_{2k}^d q = 1$.

Now we will construct the Fourier expansion in the region where $\abs{q}>1$.   As above, 
with $\tilde{q}:= \zeta_{2k}^{d} q$, 
we take 
$r$ sufficiently small so that $\abs{\tilde{q} e^{24s}}>1$ and obtain 
\begin{align*}
\tilde{q} \Phi_{d,k}(z) = \sum_{n\ge 0} B_{k}(24n-1) (q\zeta_{2k}^d)^n 
%=& \frac{\tilde{q}}{2\pi i} \int_{\abs{s} =r} \frac{a_k(s) e^{23s} ds}{1-\zeta_{2k}^d q e^{24s}} \\
=& \sum_{m\ge 0}   b_m(k)\frac{\tilde{q}}{2\pi i} \int_{\abs{s} =r} \frac{e^{23s} ds}{s^{m+1}(1-\tilde{ q} e^{24s})} \\
=&- \sum_{m\ge 0} b_m(k) \int_{\abs{s}=r} \frac{ds}{s^{m+1}} \sum_{n=0}^\infty (\tilde{q})^{-n} e^{-(24n+1)s}  \\
=& -\sum_{m\ge 0} b_m(k)  \sum_{n=0}^\infty (\tilde{q})^{-n}   \int_{\abs{s}=r} \frac{ds}{s^{m+1}} e^{-(24n+1)s}  \\
=&- \sum_{m\ge 0} \frac{b_m(k)}{m!}\sum_{n=0}^\infty ( \tilde{q})^{-n}  (-24n-1)^{m}\\
=& -\sum_{n=0}^\infty \zeta_{2k}^{-dn} q^{-n} \sum_{m\ge 0} \frac{b_m(k)}{m!} (-24n-1)^m\\
=& -\sum_{n=0}^\infty   \zeta_{2k}^{-dn} 
q^{-n} \frac{1}{(-24n-1)^{\frac{1}{4}}} I_\frac{1}{2}\(\frac{\pi \sqrt{-24n-1}}{12k}\) \\
=& -\sum_{n=0}^\infty  \zeta_{2k}^{-dn} q^{-n} \frac{(-1)^{\frac{1}{2}} i^{-\frac{1}{2} } }{(-(24n+1))^{\frac{1}{4}}} 
J_{\frac{1}{2}}\(\frac{\pi \sqrt{24n+1}}{12k}\) 
%=& -\sum_{n=0}^\infty  \zeta_{2k}^{-dn} q^{-n} \frac{(-1)^{\frac{1}{2}} i^{-\frac{1}{2} } }{(-(24n+1))^{\frac{1}{4}}} 
%J_{\frac{1}{2}}\(\frac{\pi \sqrt{24n+1}}{12k}\),
\end{align*}
where we have used 
$$I_\frac{1}{2}(ix) = i^{-\frac{1}{2}} J_\frac{1}{2}(-x) = i^{-\frac{1}{2}} (-1)^\frac{1}{2} J_{\frac{1}{2}}(x).$$
But then the Fourier expansion of $F(q)$ in the \emph{lower half-plane} becomes 
\begin{align*}
F(q) =&  1- \pi \sum_{k=1}^\infty \frac{(-1)^{\lfloor \frac{k+1}{2}\rfloor} }{k}   
\sum_{d\pmod{2k}}\omega_{-d,2k} e\(-\frac{d (1+(-1)^k)}{8} \) \\
&\hspace{2in} \times \sum_{n=0}^\infty \zeta_{2k}^{-dn} q^{-n} 
(24n+1)^{-\frac{1}{4}}  J_{\frac{1}{2}} \(\frac{\pi}{12 k} \sqrt{24n+1}\)  \\ 
=& 1 - \pi \sum_{k=1}^\infty \frac{ (-1)^{\lfloor \frac{k+1}{2}\rfloor}} {k} \sum_{n=0}^\infty 
\frac{A_{2k}\(  -n - \frac{k(1+(-1)^k)}{4}\)}{(24n+1)^{\frac{1}{4}}}  J_{\frac{1}{2}} \(\frac{\pi}{12 k} \sqrt{24n+1}\) q^{-n}\\
=& 1-\sum_{n\ge 0} q^{-n} \wt{\alpha}(n)
\end{align*}
where the last equality follows from switching the order of summation.  Also, we note that convergence of the 
series $\tilde{\alpha}$ follows from Theorem 4.1 of \cite{bo1}.

Applying Lemma \ref{lem:gammay} we may deduce that 
for $\abs{q}>1$ we have $F(q) = 2\sum_{n\ge 1} \(\frac{-12}{n}\)  q^{-\frac{n^2-1}{24}}.$

\section{Relationship to Other Works}
The connection between partial theta functions and mock theta functions has been observed in the work of 
Lawerence-Zagier \cite{lz} and Zwegers \cite{zwegers}.  In this section we describe the framework 
layed out in their works.  It remains an open question as to if and how the construction of this work might 
shed light on the applications those authors had in mind.

\subsection{WRT-Invariants}\label{sec:invariants}
Curiously partial theta functions have arisen in the computation of topological invariants.  See, for instance, \cite{lz, zagierStrange, zagierHalf}. 
For instance, for each root of unity $\xi$ the Witten-Reshetikhin-Turaev (WRT) invariant associated to the Poincar\'e homology sphere 
$M$ is an element $W(\xi) \in \Z[\xi]$.   The Poincar\'e homology sphere is the quotient 
space $X  = \SO(3)/\Gamma$ where 
$\Gamma$ is the rotational symmetry group of the icosahedron.  
Thus, $X$ is a 3-manifold $X$ which has the same homology as a $3$-sphere, namely
$H_0(X, \Z) = H_3(X, \Z) = \Z$ and $H_n(X, \Z) = \{0\}$ for all other $n$. 

Let  $$A_{\pm} (q) := 
\sum_{\begin{subarray}{c} n>0 \\ n\equiv \pm 1\pmod{5}\end{subarray}}  \(\frac{12}{n}\) q^{(n^2-1)/120},$$
where $\(\frac{\cdot}{\cdot}\)$ is the Legendre symbol. 
Lawerence and Zagier (Theorem 1 and the remark preceding (20)  of \cite{lz}) 
proved that the radial limit as $q\to \xi$  (for $\abs{q}<1$) of 
$1-A_{\pm}(q)$
agrees with $W(\xi)$. 
The function 
$q^{\frac{1}{120}}(A_{-}(q) + A_{+}(q)) 
= \sum_{n>0} \re{\(\frac{12}{n}\) \epsilon(n)} q^{\frac{n^2}{120}},$ where $\epsilon$ is the nontrivial Dirichlet 
character of modulus 5, thus it is very nearly a partial theta function. 

Lawerence and Zagier explain that while the partial theta function 
$\sum_{n>0} \(\frac{12}{n}\) \epsilon(n) q^{\frac{n^2}{120}}$
does not have any modular properties 
its asymptotics toward roots of unity
are equal, up to a constant, to the asymptotics toward roots of unity of the Eichler integral
$$\Theta^*(z):= \int_{-\ol{z}}^{i\infty} \frac{\Theta(\tau) d\tau}{\sqrt{\tau + z}}$$
where $\Theta(z) = \sum_{n\in \Z} n \(\frac{12}{n}\) \epsilon(n) q^{\frac{n^2}{120}}$ is a weight $3/2$ theta function.   
As explained by Zwegers \cite{zwegers} 
this Eichler integral has pseudomodular transformations
which match the pseudomodular transformations of Ramanujan's mock theta functions. 

Moreover, Zwegers (see Section 5 of \cite{lz}) demostrated a curious connection between  one of
Ramanujan's mock theta functions and the functions $A_{\pm}(q)$.  Define 
\begin{align*}
\Phi(q):= -1+ \sum_{n=0}^\infty \frac{q^{5n^2}}{(q;q^5)_n (q^4;q^5)_n}.
\end{align*}
The $\Phi$, is a mock theta function. 
As in the introduction, the series defining $\Phi(q)$ converges not only for $\abs{q}<1$, but also for $\abs{q}>1$.  
Let $\Phi^*(q):= -1 - \sum_{n=0}^\infty \frac{q^{5n+1}}{(q;q^5)_n (q^4;q^5)_n} = \Phi(1/q)$.
Then Zwegers establishes 
$$- \Phi^*(q) = A_+(q) - \frac{1}{(q^4;q^5)_\infty (q;q^5)_\infty} F_+(q) = A_-(q) +  \frac{1}{(q^4;q^5)_\infty (q;q^5)_\infty}  
F_-(q)$$
where $F_{\pm}(q) := \sum_{\pm \(n-\frac{1}{2}\) > 0} (-1)^n q^{5n^2-n)/2}.$

These $q$-series identities establish a deep relationship between the WRT invariants, 
the asymptotics of partial theta functions, and the asymptotics of a mock theta function.   The Poincar\'e homology
sphere is an example of a Seifert manifold (a certain type of construction).   Similar results exist for 
all other Siefert manifolds, see \cite{hikami}. 

%\vspace{.2in} 

\subsection{$C^\infty$ asymptotics}\label{sec:Cinfity}
Zagier's definition of a mock theta function was not how Ramanujan thought of mock theta functions.  
In his final letter to Hardy, Ramanujan explained the concept of a mock theta function. Following 
Zwegers's slight rephrasing \cite{zwegers1}, Ramanujan ``defined''' 
a mock theta function is a function $f$ of the complex variable $q$, 
defined by a $q$-hypergeometric series (Ramanujan calls this the Eulerian form), which converges for $\abs{q}<1$
and satisfies the following 
\begin{enumerate}
\item infinitely many roots of unity are exponential singularities
\item for every root of unity $\xi$ there is a theta-function $\vt_\xi(q)$ such that the difference 
$f(q)-\vt_\xi(q)$ is bounded as $q\to \xi$ radially,
\item there is no theta function that works for all $\xi$, i.e. $f$ is not the sum of two functions one of which is a theta function 
and the other a function which is bounded in all roots of unity. 
\end{enumerate}
\begin{remark}
When Ramanujan refers to theta functions he means sums, products, and quotients of series of the form $\sum_{n\in \Z} 
\epsilon^n q^{an^2+bn}$ with $a, b\in \Q$ and $\epsilon = \pm 1$. 
\end{remark}

%Zwegers \cite{zwegers1} discussed the connection between mock theta functions and Rogers' false theta functions.  
%His discussion extends the behavior of mock theta functions across the unit circle to make a connection with 
%the false theta functions. 

Zwegers made the following conjecture. 
\begin{conjecture}
If $\xi$ is a root of unity where $f$ is bounded (as $q\to \xi$ radially inside the unit circle), for example $\xi=1$, then 
$f$ is $C^\infty$ over the line radially through $\xi$. 

If $\xi$ is a root of unity where $f$ is not bounded, for example, $\xi=-1$, then the asymptotic expansion of the bounded 
term in condition (2) in the `definition' of mock theta function is the same as the asymptotic expansion of $f$ as 
$q\to \xi$ radially outside the unit circle. 
\end{conjecture}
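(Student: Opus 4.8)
Both parts of the conjecture can be approached through the function $F$ of Theorem~\ref{thm:main}, which is an explicit analytic bridge: inside the unit disc $F$ is the mock theta function $f$, outside it is the partial theta function $2\psi(q^{-1})$, and the local behaviour of $f$ at a root of unity $\xi$ can be read off from the Rademacher-type series. The key structural input is the fact noted in Section~\ref{sec:Proof} that each $\Phi_{d,k}$ is holomorphic on the whole Riemann sphere except at the single point $q=\zeta_{2k}^{-d}$. First I would classify, for a primitive $c$-th root of unity $\xi$, which summands of $F$ are singular at $q=\xi$: since $\zeta_{2k}^{-d}$ has exact order $2k$ whenever $\gcd(d,2k)=1$, the equation $\zeta_{2k}^{-d}=\xi$ forces $2k=c$ together with a single congruence class $d\pmod c$, so \emph{no} term is singular at $\xi$ when $c$ is odd and \emph{exactly one} term, call it $T_\xi$ (with $k=c/2$), is singular when $c$ is even. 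I would then observe that this dichotomy is the classical one: $f(q)=\sum_{n\ge0}q^{n^2}(-q;q)_n^{-2}$ has a finite radial limit at $\xi$ exactly when $1+\xi^j\ne0$ for all $j$, i.e.\ when $\mathrm{ord}(\xi)$ is odd --- so the series $F$ acquires a singularity at precisely those $\xi$ where $f$ blows up.

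For the first part, let $\mathrm{ord}(\xi)=c$ be odd (for instance $\xi=1$). Every $\Phi_{d,k}$ is then holomorphic in a fixed complex neighbourhood of $\xi$, since the singular points $\zeta_{2k}^{-d}$ are primitive $2k$-th roots of unity and hence stay a distance $\gg_\xi 1/k$ away, and the bound $b_m(k)=O(C^m/m!)$ of Section~\ref{sec:Proof} controls $\Phi_{d,k}$ there with $r\asymp 1/k$. The substantive step is to upgrade the convergence of $F$ from $\H$ and $\H^-$ (Theorem~\ref{thm:main}) to locally uniform convergence on a two-sided complex neighbourhood of $\xi$; as in Rademacher's method and Theorem~4.1 of \cite{bo1}, the crude term-by-term bound diverges and one must exploit cancellation in the Kloosterman-type sums $A_{2k}$, now uniformly in $q$ near $\xi$. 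Granting this, $F$ is holomorphic and bounded in a punctured neighbourhood of $\xi$, hence extends holomorphically across $\xi$ by Riemann's theorem; since $F=f$ for $\abs{q}<1$ this produces a holomorphic --- in particular $C^\infty$ --- continuation of $f$ along the radial line through $\xi$, which is essentially an edge-of-the-wedge statement for the germs of $F$ on the two sides of $\abs{q}=1$.

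For the second part let $\mathrm{ord}(\xi)=c$ be even (for instance $\xi=-1$) and split $F=N_\xi+T_\xi$, with $T_\xi$ the single singular summand ($k=c/2$) and $N_\xi$ the rest. By the argument of the previous paragraph $N_\xi$ extends holomorphically across $\xi$, while $T_\xi$ is holomorphic in a \emph{punctured} neighbourhood of $\xi$; hence, granting the same uniform convergence, $F$ is one holomorphic function on a punctured neighbourhood of $\xi$, and its radial asymptotic expansion at $\xi$ is automatically the same whether $q\to\xi$ from $\abs{q}<1$ or from $\abs{q}>1$. Since $F=f$ inside, $F=2\psi(q^{-1})$ outside, and $f-\vartheta_\xi$ is bounded near $\xi$ from inside (condition~(2), with $\vartheta_\xi$ in fact matching $f$ up to an exponentially small error), this already identifies the asymptotic expansion of $\vartheta_\xi$ with that of $f$ approached radially from outside. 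To make the common expansion explicit --- and to confirm it is the expansion of a genuine theta function --- I would extract the behaviour of $\Phi_{d,k}(z)$ near its singularity straight from the contour integral: with $u=-\tfrac1{24}\log(\zeta_{2k}^dq)=-\tfrac{\pi i}{12}\bigl(z+\tfrac{d}{2k}\bigr)$ --- a positive real tending to $0$ from $\H$, a negative real from $\H^-$ --- one writes $\bigl(1-\zeta_{2k}^dq\,e^{24s}\bigr)^{-1}=\bigl(24(u-s)\bigr)^{-1}\bigl(1+O(u-s)\bigr)$, shrinks the contour $\abs{s}=r<\abs{u}$, and resums using $b_m(k)=(\pi/12k)^{2m+\frac12}/\Gamma(m+\tfrac32)$; the leading behaviour is of shape $\tfrac1{24u}\exp\!\bigl((\pi/12k)^2/u\bigr)$ with lower-order corrections, an honest essential singularity in $u$ and the same function on both sides. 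Reversing $I_{1/2}(x)=i^{-1/2}J_{1/2}(ix)$ exactly as in Section~\ref{sec:Proof} and applying Lemma~\ref{lem:gammay} turns this into the Euler--Maclaurin expansion of the partial theta function $2\psi$, which the Lawrence--Zagier and Zwegers analysis recalled in Section~\ref{sec:invariants} identifies (up to the twist $q^{-1/24}$, the factor $2$, and the Eichler integral $R_3$) with the expansion of Ramanujan's $\vartheta_\xi$.

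The main obstacle in both parts is the uniform-convergence step: pushing the Rademacher--Kloosterman estimates of \cite{bo1}, which are carried out inside $\H$, to a complex neighbourhood of $\xi$ straddling $\abs{q}=1$, with remainder bounds uniform in $q$. This is genuinely harder than the formal rearrangements of Section~\ref{sec:Proof}, because near $\abs{q}=1$ the $\Phi_{d,k}$ are no longer uniformly small and one relies entirely on cancellation in the $A_{2k}$; it is the two-sided analogue of the delicate convergence proof behind Theorem~4.1 of \cite{bo1}. A secondary difficulty is the final identification in the second part: matching the series from the contour-integral computation, term by term, with the classical expansion of $f$ near roots of unity requires careful tracking of all the normalisations --- the $q^{\pm1/24}$ twists, the Dedekind-sum convention in $\omega_{h,k}$, the factor $2$, and the Legendre- versus Kronecker-symbol conventions --- so that one obtains an \emph{equality} with Ramanujan's $\vartheta_\xi$ and not merely proportionality.
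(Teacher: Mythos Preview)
The paper does not prove this statement: it is presented as a conjecture of Zwegers and left open. Immediately after stating it the paper only remarks that Lawrence--Zagier shows the \emph{limits} agree, and then says ``It would be interesting to use the function $F$ from Theorem~\ref{thm:main} of this paper to prove this conjecture.'' So there is no proof in the paper to compare your proposal against; what you have written is precisely an attempt to carry out the program the paper merely suggests.

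That said, a few comments on the proposal itself. Your structural analysis --- that $\Phi_{d,k}$ is singular only at $q=\zeta_{2k}^{-d}$, hence at a primitive $2k$-th root of unity, so that odd-order $\xi$ see no singular summand and even-order $\xi$ see exactly one --- is correct and is the right way to organize the argument. You are also right that the entire scheme hinges on upgrading the convergence of $F$ from $\H\cup\H^{-}$ to a two-sided neighbourhood of $\xi$ on $\abs{q}=1$; you flag this honestly, and it is genuinely the unresolved step. The Kloosterman estimates behind Theorem~4.1 of \cite{bo1} are proved with a fixed $y>0$, and pushing them to $y\to 0$ (or across to $y<0$) uniformly is not in the literature for this series.

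One further gap you should be aware of in the second part: the conjecture speaks of the asymptotic expansion of $f$ as $q\to\xi$ \emph{from outside}, meaning $f$ defined by its $q$-hypergeometric series for $\abs{q}>1$. But as the paper emphasizes in the introduction, that extension is not unique --- the two series $f(q)$ and $f_2(q)$ agree for $\abs{q}<1$ yet differ for $\abs{q}>1$ --- and in particular $F(z)=2\psi(q^{-1})$ in $\H^{-}$, whereas $f(q)=2\psi(q^{-1})-\dfrac{1}{(-q^{-1};q^{-1})_\infty^2}\sum_{n\ge0}(-1)^n q^{-n(n+1)/2}$ there. Your argument, as written, proves a statement about $F$ (equivalently about $2\psi(q^{-1})$) from outside, not about $f$; to close the loop you must also control the asymptotics of the extra modular-times-false-theta term at the relevant roots of unity, or else reinterpret the conjecture as a statement about $F$ rather than about any particular $q$-hypergeometric extension of $f$.
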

\begin{remark}
Zwegers states this theorem for a different mock theta function of Ramanujan.  However, by his reasoning the same should 
follow for $f$. 
\end{remark}
The work of Lawerence and Zagier \cite{lz} shows that the limits must agree. 
It would be interesting to use the function $F$ from Theorem \ref{thm:main} of this paper to prove this conjecture. 

%\vspace{.2in}
%The other omission to this paper is that we only continue the series $P_h(z)$ in Theorem \ref{thm:poincare} 
%to the lower half plane.  It would be interesting to continue the function $P_{nh}(z)$ to the lower half plane.  We would expect such a series to be of the form $\sum_{n>0} \alpha(n) \Gamma\(\frac{1}{2}, \frac{\pi n y}{6}\) q^{-n}$ where 
%$\alpha(n)$ are the Fourier coefficients of $f(q)$. 

%%%%%%%%%%%%%%
%%  BIBLIOGRAPHY     %%
%%%%%%%%%%%%%%

\end{document}